\documentclass[10pt]{article}
\usepackage[margin=1in]{geometry}
\usepackage{amsmath, amssymb, amsthm}
\usepackage{mathtools}
\usepackage{authblk}
\usepackage{graphicx}
\usepackage{float}
\usepackage{algorithm}
\usepackage{algpseudocode}
\usepackage{booktabs}
\usepackage{csquotes}
\usepackage{xcolor}
\usepackage[english]{babel}

\usepackage[backend=biber,style=apa,citestyle=authoryear]{biblatex}
\usepackage[colorlinks=true,linkcolor=blue,citecolor=blue,urlcolor=blue]{hyperref}

\newtheorem{theorem}{Theorem}
\newtheorem{prop}{Proposition}

\newtheorem{cor}{Corollary}
\theoremstyle{definition}

\title{Markov processes on a circular lattice}
\author{Sourav Majumdar \\\texttt{souravm@iitk.ac.in}}
\affil{Department of Management Sciences, Indian Institute of Technology Kanpur, India}
\date{}

\begin{document}
\maketitle

\abstract{We develop a Markov process viewpoint for discrete circular distributions motivated by directional-statistics settings where angles are observed on a finite grid and evolve over time. On the $m$-point discrete circle, the cycle graph, we study diffusion-generated families, obtaining an explicit transition kernel, exact trigonometric moments, and convergence to uniformity. We present a simple approach to construct reversible nearest-neighbour chains with any prescribed strictly positive stationary pmf $\pi$, providing discrete analogues of Markov processes on the continuous circle. We construct processes whose stationary laws are the discrete von Mises and wrapped Cauchy distributions with closed-form normalizers and exact moments.}

\noindent\textbf{Keywords:} Directional Statistics, Circular Statistics, Discrete Circle, von Mises process, Graph Laplacian.

\section{Introduction}

Directional statistics (\cite{mardia2000directional,jammalamadaka2001topics}) concerns random directions and orientations, with circular data as the basic case.
In the continuous setting, a useful organizing principle is that many classical circular laws arise naturally
from stochastic processes on $\mathbb{S}^1$: Brownian motion on the circle is ergodic with the uniform law as
its equilibrium, while the von Mises distribution can be characterized as the stationary law of a
time-reversible mean-reverting diffusion on the circle, the von Mises process,  see
\cite{kent1975disc,kent1978time}), which plays the role of an Ornstein-Uhlenbeck process on $\mathbb{S}^1$.
Such process constructions are especially compelling when angles are observed sequentially, rather
than only a static (time-independent) marginal distribution; see also recent related diffusion-based developments in directional statistics such as
\cite{garcia2019langevin,majumdar2024diffusion,GarcaPortugus2025}.

Discrete circular data arise whenever directions are recorded on a finite grid:
finite-resolution sensors, discretized bearings, binned phase measurements, or discretized directional
preferences. A comprehensive recent treatment of static modeling on a circular lattice is given by
\cite{mardia2023families}, who organize families of discrete circular distributions via general construction
principles and illustrate them with applications including settings where circular observations are naturally
recorded on a finite set of directions, and where time variation may be present, such as roulette wheel and acrophase data. These examples motivate moving beyond purely static pmfs:
when observations are time-indexed, one often wants a model for how a discrete direction evolves over time,
not only what its marginal distribution is.

The goal of this paper is to develop a Markov process viewpoint for discrete circular distributions on the
simplest discrete circle: the $m$-point lattice identified with the cycle graph. Concretely, we model a
time-indexed discrete direction as an angle-valued process $\Theta_t=\theta_{X_t}$ where
$(X_t)_{t\ge 0}$ is a continuous-time Markov chain on the cycle $\mathbb{Z}_m$. This provides discrete analogues of
two canonical continuous circle constructions:
\begin{enumerate}
\item \textbf{Diffusion-generated time-marginals:}
We evolve an initial pmf $p_0$ by a semigroup $P_t$ on the cycle and study the resulting family
$p_t=p_0P_t$. On $\mathbb{Z}_m$, Fourier analysis yields fully explicit transition kernels, exact trigonometric
moments, and mixing rates to the uniform law.

\item \textbf{Drift-generated stationary laws:}
We construct nearest-neighbour generators $Q$ on the cycle that are reversible with respect to a prescribed
strictly positive pmf $\pi$. This yields a discrete analogue of a mean-reverting circular diffusion: the chain
has local (nearest-neighbour) dynamics on the circle and converges to a specified equilibrium preference
$\pi$. In particular, choosing $\pi$ to be discrete von Mises produces a natural discrete von Mises process in direct analogy with the continuous von Mises process of \cite{kent1978time}; likewise a
discrete wrapped Cauchy target yields an analogous wrapped-Cauchy equilibrium process.
\end{enumerate}
A key feature of working on the cycle graph is that these process constructions come with explicit and
computationally convenient consequences such as closed-form Fourier representations, exact trigonometric moments, and
explicit convergence rates. Since we are able to obtain explicit transition kernels, our results allow for likelihood based inference of discrete-circular time varying data, for practical problems listed above.  
\paragraph{Organization of the paper.}
Section 2 introduces diffusion semigroups on the cycle generated by fractional powers of the cycle Laplacian.
We derive an explicit Fourier series for the transition kernel, identify uniform stationarity, obtain exact
trigonometric moment formulas, and give convergence bounds to uniformity.
Section 3 develops the complementary construction: a reversible nearest-neighbour chain targeting an
arbitrary positive pmf $\pi$. Specializing $\pi$ yields discrete von Mises and discrete wrapped Cauchy processes,
and when the location parameter lies on the grid we derive closed-form normalizing constants and exact
trigonometric moments. 
\paragraph{Notation and conventions.}
Fix an integer $m\ge 3$. We write $\mathbb{Z}_m := \{0,1,\dots,m-1\}$ with arithmetic understood modulo $m$.
We identify $r\in\mathbb{Z}_m$ with the grid angle $\theta_r:=2\pi r/m\in[0,2\pi)$ and write
$D_m:=\{\theta_r:r\in\mathbb{Z}_m\}$.
We consider the cycle graph $G_m=(\mathbb{Z}_m,E)$ with edges $r\leftrightarrow r\pm 1$ (mod $m$), and its
(combinatorial) Laplacian $L$ acting on functions $f:\mathbb{Z}_m\to\mathbb{R}$ by
\[
(Lf)(r)=2f(r)-f(r+1)-f(r-1),
\]
with indices interpreted modulo $m$.

\section{Diffusion semigroups on the cycle}

See \cite{lovasz1993random} for a background on random walks on graphs. Fix $\alpha>0$ and $\beta\in(0,1]$. We define the semigroup (see \cite{Jacob2001}, Sec. 2.7 and for graph Laplacians see \cite{chung1997spectral}, Sec. 1.2 and Ch. 10),
\begin{equation}\label{eq:frac_semigroup}
P_t^{(\beta)} := \exp \big(-\alpha t L^\beta\big),\qquad t\ge 0,
\end{equation}
where $L^\beta$ is defined as follows:
since $L$ is real symmetric, there exists an orthonormal matrix $U$ and a diagonal matrix
$\Lambda=\mathrm{diag}(\lambda_0,\dots,\lambda_{m-1})$ with $\lambda_j\ge 0$ such that
\begin{equation}\label{eq:spectral_decomp}
L = U\Lambda U^\top .
\end{equation}
Since $L$ is positive semidefinite,
all eigenvalues satisfy $\lambda_j\ge 0$.
We then set
\begin{equation}\label{eq:Lbeta_def}
L^\beta := U\Lambda^\beta U^\top,\qquad
\Lambda^\beta:=\mathrm{diag}(\lambda_0^\beta,\dots,\lambda_{m-1}^\beta),
\end{equation}
and define
\begin{equation}\label{eq:semigroup_def}
P_t^{(\beta)} := \exp(-\alpha t L^\beta)
:= U \mathrm{diag} \big(e^{-\alpha t\lambda_0^\beta},\dots,e^{-\alpha t\lambda_{m-1}^\beta}\big) U^\top.
\end{equation}

The case $\beta=1$ corresponds to the standard heat semigroup on the graph (the continuous-time simple random walk). The case $\beta=\tfrac12$ is a discrete analogue of the Poisson semigroup on the continuous circle.

\paragraph{Markov interpretation.}
Because $L\mathbf{1}=0$, we have $P_t^{(\beta)}\mathbf{1}=\mathbf{1}$ for all $t$ (rows sum to $1$). Moreover, for $\beta\in(0,1)$ one can represent $P_t^{(\beta)}$ as a Bochner subordinate of the heat semigroup,
which in particular preserves positivity; see \cite[Sec. 4.3]{Jacob2001}. Thus $P_t^{(\beta)}(r,s)$ can be interpreted as transition probabilities of a continuous-time Markov chain $(X_t)_{t\ge 0}$ on $\mathbb{Z}_m$ with generator $-\alpha L^\beta$.

We also consider the associated angle-valued process
\[
\Theta_t := \theta_{X_t}\in D_m.
\]

\subsection{Transition kernel}

On the finite cyclic group $\mathbb{Z}_m$, define the characters
\[
\varphi_k(r):=\exp \Big(i\frac{2\pi k r}{m}\Big),\qquad k,r\in\mathbb{Z}_m.
\]
They satisfy the orthogonality relation
\begin{equation}\label{eq:orthogonality}
\frac{1}{m}\sum_{r=0}^{m-1}\varphi_k(r) \overline{\varphi_\ell(r)}
=\frac{1}{m}\sum_{r=0}^{m-1}e^{i\frac{2\pi (k-\ell)r}{m}}
=\mathbf{1}\{k=\ell\}.
\end{equation}

\begin{theorem}[Explicit transition kernel]\label{thm:kernel}
For each $k\in\mathbb{Z}_m$, $\varphi_k$ is an eigenfunction of $L$ with eigenvalue
\begin{equation}\label{eq:eigs}
\lambda_k = 2-2\cos \Big(\frac{2\pi k}{m}\Big)
= 4\sin^2 \Big(\frac{\pi k}{m}\Big).
\end{equation}
Consequently, for $\beta\in(0,1]$ and $t\ge 0$,
\begin{equation}\label{eq:kernelDFT}
P_t^{(\beta)}(r,s)
= \frac{1}{m}\sum_{k=0}^{m-1}\exp \big(-\alpha t \lambda_k^\beta\big) 
\exp \Big(i\frac{2\pi k}{m}(s-r)\Big).
\end{equation}
Moreover, $P_t^{(\beta)}(r,s)$ depends only on $s-r\pmod m$ (translation invariance).
\end{theorem}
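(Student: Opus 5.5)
The plan has three steps: check the eigenvalue equation directly, build a real orthonormal eigenbasis from the characters, and then substitute that basis into the definition \eqref{eq:semigroup_def}.

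\textbf{Step 1: eigenvalues.} We compute $L\varphi_k$ directly. Since $\varphi_k(r\pm1)=\varphi_k(r)e^{\pm i2\pi k/m}$, we get
\[
(L\varphi_k)(r)=\varphi_k(r)\bigl(2-e^{i2\pi k/m}-e^{-i2\pi k/m}\bigr)=\bigl(2-2\cos(2\pi k/m)\bigr)\varphi_k(r).
\]
The half-angle identity $1-\cos x=2\sin^2(x/2)$ then gives the second form in \eqref{eq:eigs}.

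\textbf{Step 2: a real orthonormal eigenbasis.} Let $u_k:=m^{-1/2}\varphi_k$. By \eqref{eq:orthogonality}, these $m$ vectors form an orthonormal basis of $\mathbb{C}^m$, so the unitary $F$ with columns $u_k$ satisfies $L=F\,\mathrm{diag}(\lambda_k)\,F^*$. The definition \eqref{eq:semigroup_def} uses a real orthogonal $U$, so we must check that the result does not depend on the choice of eigenbasis. This is the only real subtlety. The cleanest route is to note that $U g(\Lambda) U^\top$ equals $\sum_{\lambda} g(\lambda)\,\Pi_\lambda$, where $\Pi_\lambda$ is the orthogonal projector onto the $\lambda$-eigenspace of $L$. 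These projectors are intrinsic to $L$, so any orthonormal eigenbasis, real or complex, yields the same matrix.

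Concretely, since $\lambda_k=\lambda_{m-k}$, the projector onto the eigenspace for a value $\lambda$ is $\sum_{k:\lambda_k=\lambda}u_ku_k^*$, because the $u_k$ with $\lambda_k=\lambda$ span that eigenspace. We apply this with $g(\lambda)=e^{-\alpha t\lambda^\beta}$. This function is well defined at $\lambda=0$ (taking $0^\beta=0$), which is consistent with \eqref{eq:Lbeta_def}.

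\textbf{Step 3: the kernel formula.} Expanding entrywise,
\[
P_t^{(\beta)}(r,s)=\sum_k e^{-\alpha t\lambda_k^\beta}\,u_k(r)\overline{u_k(s)}=\frac1m\sum_k e^{-\alpha t\lambda_k^\beta}e^{i2\pi k(r-s)/m}.
\]
Re-indexing $k\mapsto -k \pmod m$ uses $\lambda_{-k}=\lambda_k$ and turns the exponent into $i2\pi k(s-r)/m$, which is \eqref{eq:kernelDFT}. The same symmetry shows the sum is real, as it should be. Translation invariance is immediate, because the right-hand side depends on $(r,s)$ only through $s-r$, and $e^{i2\pi k(s-r)/m}$ is $m$-periodic in $s-r$.

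The main obstacle is Step 2: reconciling the complex Fourier basis with the real orthogonal $U$ in the definition. The spectral-projector argument above handles it, and an alternative would be to pair $\varphi_k$ and $\varphi_{m-k}$ into cosines and sines. The rest is a direct calculation.
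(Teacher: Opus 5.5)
Your proposal is correct and follows essentially the same route as the paper: you verify $L\varphi_k=\lambda_k\varphi_k$ directly, diagonalize in the normalized character basis, and read off the kernel. The paper does the last step by applying $P_t^{(\beta)}$ to $\delta_s$ through the Fourier expansion, whereas you take the $(r,s)$ entry of $F\,\mathrm{diag}(e^{-\alpha t\lambda_k^\beta})F^*$. Your spectral-projector remark makes explicit a step the paper passes over (that $L^\beta$, defined through a real orthogonal $U$, acts on $\varphi_k$ by $\lambda_k^\beta$), and your reindexing $k\mapsto -k$ justifies the $r-s\to s-r$ flip more cleanly than the paper's appeal to complex conjugation.
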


Let $u$ denote the uniform pmf on $\mathbb{Z}_m$:
\[
u(r):=\frac1m,\qquad r\in\mathbb{Z}_m.
\]

\begin{cor}[Convergence to uniformity]\label{cor:uniform}
For all $\beta\in(0,1]$ and $t\ge 0$, $uP_t^{(\beta)}=u$. Moreover, for any initial pmf $p_0$ on $\mathbb{Z}_m$, the evolved pmf $p_t:=p_0P_t^{(\beta)}$ converges to $u$ as $t\to\infty$.
\end{cor}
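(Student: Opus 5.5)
The plan is to read everything off the Fourier representation in Theorem~\ref{thm:kernel}. For $u P_t^{(\beta)}=u$ I will argue by column sums. By \eqref{eq:kernelDFT}, summing over $r$ for fixed $s$ and using the orthogonality relation \eqref{eq:orthogonality} (the sum over $r$ of $e^{-i2\pi kr/m}$ equals $m\mathbf{1}\{k=0\}$) gives
\[
\sum_{r} P_t^{(\beta)}(r,s)=\exp(-\alpha t\lambda_0^\beta)=1,
\]
since $\lambda_0=0$. This uses $\lambda_0^\beta=0$ for $\beta>0$. Hence $(uP_t^{(\beta)})(s)=\frac1m\sum_r P_t^{(\beta)}(r,s)=\frac1m$. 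Alternatively, one can invoke symmetry of $P_t^{(\beta)}$, which is immediate from \eqref{eq:semigroup_def}, together with $P_t^{(\beta)}\mathbf 1=\mathbf 1$.

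For convergence, I will write $p_t(s)=\sum_r p_0(r)P_t^{(\beta)}(r,s)$ and insert \eqref{eq:kernelDFT}:
\[
p_t(s)=\frac1m\sum_{k=0}^{m-1} e^{-\alpha t\lambda_k^\beta}\,\widehat{p_0}(k)\,e^{i2\pi ks/m},\qquad \widehat{p_0}(k):=\sum_r p_0(r)e^{-i2\pi kr/m}.
\]
The $k=0$ term equals $\frac1m\widehat{p_0}(0)=\frac1m$ because $p_0$ is a pmf. Therefore
\[
|p_t(s)-u(s)|\le \frac1m\sum_{k=1}^{m-1}e^{-\alpha t\lambda_k^\beta}|\widehat{p_0}(k)|\le \frac{m-1}{m}\,e^{-\alpha t\lambda_1^\beta},
\]
using $|\widehat{p_0}(k)|\le 1$.

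The only point needing care is that $\lambda_k>0$ for $k\ne0$. By \eqref{eq:eigs}, $\lambda_k=4\sin^2(\pi k/m)$, and this vanishes only when $k\equiv0\pmod m$. Its minimum over $1\le k\le m-1$ is $\lambda_1=4\sin^2(\pi/m)>0$. So every nonconstant mode decays exponentially at rate at least $\alpha\lambda_1^\beta$, and $p_t\to u$ uniformly in $s$, hence in total variation. This also yields an explicit rate, which is consistent with the later convergence bounds promised in the paper.

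I expect no real obstacle. The one subtlety is interpreting $0^\beta=0$ in the spectral definition, so that the $k=0$ mode is preserved and no other mode is.
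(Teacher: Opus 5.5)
Your proposal is correct and follows the same route as the paper: expand in the Fourier modes $\varphi_k$, observe that the $k=0$ mode has $\lambda_0^\beta=0$ and is preserved, and observe that every mode with $k\neq 0$ is damped by $e^{-\alpha t\lambda_k^\beta}\to 0$ because $\lambda_k=4\sin^2(\pi k/m)>0$. Your explicit column-sum computation (or the symmetry remark) makes precise a step the paper leaves implicit, since $P_t^{(\beta)}\mathbf{1}=\mathbf{1}$ by itself only controls row sums, and your uniform bound $\frac{m-1}{m}e^{-\alpha t\lambda_1^\beta}$ adds an explicit rate on top of the paper's argument.
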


\begin{proof}
The constant function $\mathbf{1}$ is the eigenfunction $\varphi_0$ and corresponds to eigenvalue $\lambda_0=0$. Therefore $P_t^{(\beta)}\mathbf{1}=\mathbf{1}$ and $u$ is stationary. Since $\lambda_k>0$ for $k\neq 0$, every non-constant Fourier mode is multiplied by $e^{-\alpha t\lambda_k^\beta}\to 0$, implying convergence to the uniform distribution.
\end{proof}

\subsection{Exact trigonometric moments}\label{sec:moments}

Let $p$ be a pmf on $\mathbb{Z}_m$. We use the discrete Fourier transform
\[
\widehat p(k) := \sum_{r=0}^{m-1} p(r)\exp \Big(-i\frac{2\pi k r}{m}\Big),
\qquad k\in\mathbb{Z}_m,
\]
and recall that $\theta_r=2\pi r/m$ and $\Theta_t=\theta_{X_t}$.

\begin{prop}\label{prop:moments}
Let $\beta\in(0,1]$ and let $p_t=p_0P_t^{(\beta)}$. Then for every $k\in\mathbb{Z}_m$,
\begin{equation}\label{eq:ft_evolution}
\widehat p_t(k)=\widehat p_0(k)\exp \big(-\alpha t \lambda_k^\beta\big).
\end{equation}
Equivalently, for any integer $\ell$ (only $\ell \bmod m$ matters),
\begin{equation}\label{eq:trigmom}
\mathbb{E} \left[e^{i \ell \Theta_t}\right]
= \mathbb{E} \left[e^{i \ell \Theta_0}\right]\exp \big(-\alpha t \lambda_{\ell \bmod m}^\beta\big).
\end{equation}
In particular, if $X_0=r_0$ (so $\Theta_0=\theta_{r_0}$), then
\[
\mathbb{E} \left[e^{i\ell(\Theta_t-\theta_{r_0})}\right]=\exp \big(-\alpha t \lambda_{\ell \bmod m}^\beta\big),
\quad
\mathbb{E} \left[\cos \big(\ell(\Theta_t-\theta_{r_0})\big)\right]=e^{-\alpha t\lambda_{\ell \bmod m}^\beta},
\quad
\mathbb{E} \left[\sin \big(\ell(\Theta_t-\theta_{r_0})\big)\right]=0.
\]
\end{prop}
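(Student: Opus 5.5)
The plan is to compute $\widehat p_t(k)$ directly from the explicit kernel in Theorem~\ref{thm:kernel} and then use orthogonality. Write $p_t(s)=\sum_{r}p_0(r)P_t^{(\beta)}(r,s)$ and substitute \eqref{eq:kernelDFT}:
\[
\widehat p_t(k)=\sum_{s}\sum_{r}p_0(r)\,\frac1m\sum_{j}e^{-\alpha t\lambda_j^\beta}e^{i\frac{2\pi j}{m}(s-r)}e^{-i\frac{2\pi k s}{m}}.
\]
All sums are finite, so they may be interchanged. Summing over $s$ first, the orthogonality relation \eqref{eq:orthogonality} gives $\frac1m\sum_s e^{i2\pi(j-k)s/m}=\mathbf 1\{j=k\}$, which collapses the $j$-sum to $j=k$. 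What remains is $e^{-\alpha t\lambda_k^\beta}\sum_r p_0(r)e^{-i2\pi kr/m}=\widehat p_0(k)e^{-\alpha t\lambda_k^\beta}$, which is \eqref{eq:ft_evolution}.

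An equivalent route is to note that $\varphi_k$ is an eigenfunction of $P_t^{(\beta)}$ with eigenvalue $e^{-\alpha t\lambda_k^\beta}$. This holds because $P_t^{(\beta)}$ is a function of $L$ and, by Theorem~\ref{thm:kernel}, $L\varphi_k=\lambda_k\varphi_k$. Hence $\widehat p_t(k)=\langle p_0P_t,\overline{\varphi_k}\rangle=\langle p_0,P_t\overline{\varphi_k}\rangle$, using $\overline{\varphi_k}=\varphi_{-k}$ and $\lambda_{-k}=\lambda_k$. I would mention this route only as a remark.

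For \eqref{eq:trigmom}, observe that $\mathbb E[e^{i\ell\Theta_t}]=\sum_r p_t(r)e^{i2\pi\ell r/m}=\widehat p_t(-\ell)$. This depends only on $\ell\bmod m$, since $e^{i2\pi\ell r/m}$ is $m$-periodic in $\ell$. Applying \eqref{eq:ft_evolution} at $k=-\ell\bmod m$ gives the result once we note $\lambda_{-\ell}=\lambda_\ell$, which is immediate from \eqref{eq:eigs} because cosine is even (equivalently $\sin^2(\pi(m-k)/m)=\sin^2(\pi k/m)$).

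For the special case $X_0=r_0$, take $p_0=\delta_{r_0}$, so $\mathbb E[e^{i\ell\Theta_0}]=e^{i\ell\theta_{r_0}}$. Multiplying \eqref{eq:trigmom} by $e^{-i\ell\theta_{r_0}}$ yields $\mathbb E[e^{i\ell(\Theta_t-\theta_{r_0})}]=e^{-\alpha t\lambda_{\ell\bmod m}^\beta}$. The right side is real, so taking real and imaginary parts gives the cosine formula and shows the sine expectation vanishes.

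There is no genuine obstacle. The only points needing care are the sign conventions, where the DFT uses $e^{-i\cdots}$ while the moment uses $e^{+i\cdots}$, and the resulting need for the symmetry $\lambda_{-\ell}=\lambda_\ell$ and for the reduction of indices mod $m$.
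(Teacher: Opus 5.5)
Your proof is correct and is essentially the paper's. The paper proves \eqref{eq:ft_evolution} by exactly the route you relegate to a remark: it uses $P_t^{(\beta)}f_k=e^{-\alpha t\lambda_k^\beta}f_k$ for $f_k=\overline{\varphi_k}$ and then swaps the sums in $\sum_r p_t(r)f_k(r)$, where your pairing should be read as the bilinear $\sum_r\mu(r)g(r)$ rather than the normalized Hermitian inner product. Your main computation through the explicit kernel and orthogonality is the same diagonalization written out, and the moment identity via $\widehat p_t(-\ell)$ with $\lambda_{-\ell}=\lambda_\ell$ and the specialization to $p_0=\delta_{r_0}$ follow the paper step for step.
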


\begin{cor}[A one-parameter concentration summary]\label{cor:momentmatch}
Consider the location family obtained by starting the diffusion from a point mass at $r_0$
(equivalently, by shifting the kernel). Then the mean direction is $\theta_{r_0}$ and the first resultant length equals
\[
R(t):=\left|\mathbb{E}[e^{i\Theta_t}]\right|
= \exp \big(-\alpha t \lambda_{1}^\beta\big).
\]
Thus moment-matching based on an empirical resultant length $\widehat R$ suggests
\[
\widehat{\alpha t} = -\frac{\log \widehat R}{\lambda_1^\beta}.
\]
\end{cor}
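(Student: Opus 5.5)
The plan is to read off Corollary~\ref{cor:momentmatch} directly from Proposition~\ref{prop:moments} with $\ell=1$. With initial condition $X_0=r_0$, we have $p_0=\delta_{r_0}$, so $\mathbb{E}[e^{i\Theta_0}]=e^{i\theta_{r_0}}$. By \eqref{eq:trigmom},
\[
\mathbb{E}[e^{i\Theta_t}]=e^{i\theta_{r_0}}\exp\big(-\alpha t\lambda_1^\beta\big).
\]
The factor $\exp(-\alpha t\lambda_1^\beta)$ is real and strictly positive. Hence this complex number has argument $\theta_{r_0}$ (mod $2\pi$), which is by definition the mean direction. Its modulus is $R(t)=\exp(-\alpha t\lambda_1^\beta)$, since $|e^{i\theta_{r_0}}|=1$.

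For the "equivalently, by shifting the kernel" clause, we invoke the translation invariance in Theorem~\ref{thm:kernel}. The law of $X_t$ started at $r_0$ is $s\mapsto P_t^{(\beta)}(0,s-r_0)$, which is the kernel started at $0$ shifted by $r_0$. Shifting multiplies the first trigonometric moment by $e^{i\theta_{r_0}}$, giving the same conclusion.

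For the moment-matching formula, note that $m\ge 3$ gives $\lambda_1=4\sin^2(\pi/m)>0$, so $\lambda_1^\beta>0$. The map $t\mapsto R(t)$ is therefore a strictly decreasing bijection from $[0,\infty)$ onto $(0,1]$ in the product $\alpha t$. Setting $R(t)=\widehat R$ and solving gives $\alpha t=-\log\widehat R/\lambda_1^\beta$, provided $\widehat R\in(0,1]$.

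No step is a real obstacle. The only care points are:
\begin{itemize}
\item checking that the scalar factor is positive, so the mean direction is not flipped by $\pi$;
\item noting that only the product $\alpha t$ is identifiable from $R(t)$;
\item requiring $\widehat R\in(0,1]$ for the logarithm to be defined.
\end{itemize}
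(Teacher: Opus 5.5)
Your proposal is correct and takes essentially the paper's route: the paper gives no separate proof, treating the corollary as the $\ell=1$ case of the point-mass clause of Proposition~\ref{prop:moments}, which is what you do via \eqref{eq:trigmom} with $\mathbb{E}[e^{i\Theta_0}]=e^{i\theta_{r_0}}$. Your extra checks are sound and somewhat more careful than the paper, namely positivity of the real factor, $\lambda_1>0$ so the map can be inverted, and $\widehat R\in(0,1]$ for the logarithm.
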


\subsection{Mixing rates}\label{sec:mixing}

Let $u$ denote the uniform pmf on $\mathbb{Z}_m$, $u(r)=1/m$. For a pmf $p$ we measure deviation from $u$
via the Radon-Nikodym derivative $p/u$:
\[
f(r):=\frac{p(r)}{u(r)}-1 = m p(r)-1,\qquad r\in\mathbb{Z}_m.
\]
For the time-marginal $p_t=p_0P_t^{(\beta)}$, define $f_t:=p_t/u-1$. Note that
\begin{equation}\label{eq:mean_zero_ft}
\sum_{r=0}^{m-1} u(r) f_t(r)  =  \sum_{r=0}^{m-1}\big(p_t(r)-u(r)\big) = 1-1 = 0.
\end{equation}

We use the weighted norms
\[
\|g\|_{p,u}^p := \sum_{r=0}^{m-1} u(r) |g(r)|^p,\qquad p\in[1,\infty),
\qquad
\|g\|_{\infty,u}:=\max_{0\le r\le m-1}|g(r)|.
\]
In particular,
\[
\|f_t\|_{2,u}^2=\frac{1}{m}\sum_{r=0}^{m-1}|f_t(r)|^2.
\]
Total variation admits the identity
\begin{equation}\label{eq:tv_ratio_identity}
\|p_t-u\|_{TV}
=\frac12\sum_{r=0}^{m-1}|p_t(r)-u(r)|
=\frac12\sum_{r=0}^{m-1}u(r) |f_t(r)|
=\frac12\|f_t\|_{1,u}.
\end{equation}

\begin{theorem}[Total-variation bound]\label{thm:mixing_uniform}
Let $\lambda_\star:=\min\{\lambda_k:k\neq 0\}=\lambda_1=4\sin^2(\pi/m)$. For any initial pmf $p_0$ and all $t\ge 0$,
\begin{equation}\label{eq:L2contract}
\|f_t\|_{2,u} \le e^{-\alpha t \lambda_\star^\beta} \|f_0\|_{2,u}.
\end{equation}
Consequently,
\begin{equation}\label{eq:TVbound}
\|p_t-u\|_{TV}\le \frac12\|f_t\|_{2,u}
\le \frac12 e^{-\alpha t \lambda_\star^\beta} \|f_0\|_{2,u}.
\end{equation}
In particular, if $p_0=\delta_{r_0}$, then $\|f_0\|_{2,u}=\sqrt{m-1}$ and
\[
\|p_t-u\|_{TV}\le \frac12\sqrt{m-1} e^{-\alpha t \lambda_\star^\beta}.
\]
\end{theorem}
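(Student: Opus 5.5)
The plan is to work entirely in the Fourier basis of Theorem~\ref{thm:kernel}. First I will express $f_t$ through the discrete Fourier transform of $p_t$. Since $f_t(r)=m p_t(r)-1$, we get $\widehat{f_t}(k)=m\widehat p_t(k)$ for $k\neq 0$ and $\widehat{f_t}(0)=m\widehat p_t(0)-m=0$; this is the mean-zero property \eqref{eq:mean_zero_ft}.

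Next I will use Parseval for the normalized characters. By the orthogonality relation \eqref{eq:orthogonality},
\[
\|g\|_{2,u}^2=\frac1m\sum_r|g(r)|^2=\frac{1}{m^2}\sum_{k}|\widehat g(k)|^2 .
\]
Proposition~\ref{prop:moments} gives $\widehat p_t(k)=\widehat p_0(k)e^{-\alpha t\lambda_k^\beta}$, so $\widehat{f_t}(k)=\widehat{f_0}(k)e^{-\alpha t\lambda_k^\beta}$ for every $k$; the case $k=0$ holds trivially since both sides are zero. Hence
\[
\|f_t\|_{2,u}^2=\frac1{m^2}\sum_{k\neq0}e^{-2\alpha t\lambda_k^\beta}|\widehat{f_0}(k)|^2 .
\]
For $k\neq 0$ we have $\lambda_k=4\sin^2(\pi k/m)\ge 4\sin^2(\pi/m)=\lambda_\star$, because $\sin^2(\pi k/m)$ is minimized over $1\le k\le m-1$ at $k=1$ and $k=m-1$. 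Since $x\mapsto x^\beta$ is increasing for $\beta>0$, this gives $\lambda_k^\beta\ge\lambda_\star^\beta$. Bounding each exponential by $e^{-2\alpha t\lambda_\star^\beta}$ and applying Parseval again yields \eqref{eq:L2contract}. The key point in this step is that the $k=0$ mode, which does not decay, has been removed by the mean-zero property.

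For \eqref{eq:TVbound}, I will combine the identity \eqref{eq:tv_ratio_identity} with Cauchy--Schwarz (equivalently Jensen) with respect to the probability measure $u$, which gives $\|f_t\|_{1,u}\le\|f_t\|_{2,u}$. For the point-mass case, $f_0(r_0)=m-1$ and $f_0(r)=-1$ otherwise. Then
\[
\|f_0\|_{2,u}^2=\frac1m\big((m-1)^2+(m-1)\big)=m-1,
\]
which gives the final bound. I do not expect a genuine obstacle here. The only points needing care are the correct normalization of Parseval, the justification that $\lambda_1$ is the minimal nonzero eigenvalue, and the monotonicity of $x^\beta$.
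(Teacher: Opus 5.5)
Your proposal is correct and follows essentially the same route as the paper: Fourier evolution from Proposition~\ref{prop:moments}, killing the $k=0$ mode via the mean-zero property, Parseval with the $1/m^2$ normalization, Cauchy--Schwarz under $u$, and the direct point-mass computation. The only cosmetic difference is that you get $\widehat{f_t}(k)=m\widehat p_t(k)$ directly from $f_t=mp_t-1$. The paper instead first writes $f_t=f_0P_t^{(\beta)}$ using stationarity of $u$, and then applies the Fourier evolution to this signed, non-pmf function implicitly by linearity, so your version avoids that small implicit step.
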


\section{Nearest-neighbour chains with a prescribed stationary distribution}\label{sec:drift}

We now address the complementary construction problem: given a strictly positive target pmf
$\pi$ on $\mathbb{Z}_m$, construct a nearest-neighbour continuous-time Markov chain whose unique stationary
distribution is $\pi$. A convenient way to guarantee stationarity is to impose reversibility with respect to $\pi$. This can be regarded as the discrete analogue of the setup in \cite{kent1978time}.

\begin{prop}[A reversible nearest-neighbour construction]\label{prop:targetpi}
Let $\pi=(\pi_0,\dots,\pi_{m-1})$ be a strictly positive pmf on $\mathbb{Z}_m$ and let $\alpha>0$.
Define an infinitesimal generator $Q=(q_{r,s})_{r,s\in\mathbb{Z}_m}$ by
\begin{equation}\label{eq:rates_general}
q_{r,r+1}=\alpha\sqrt{\frac{\pi_{r+1}}{\pi_r}},\qquad
q_{r,r-1}=\alpha\sqrt{\frac{\pi_{r-1}}{\pi_r}},\qquad
q_{r,r}=-(q_{r,r+1}+q_{r,r-1}),
\end{equation}
and $q_{r,s}=0$ otherwise. Then:
\begin{enumerate}
\item[(i)] $Q$ is a valid generator of a nearest-neighbour continuous-time Markov chain on $\mathbb{Z}_m$
(all off-diagonal rates are nonnegative and rows sum to $0$);
\item[(ii)] the chain is reversible with respect to $\pi$, i.e.\ it satisfies detailed balance
\begin{equation}\label{eq:db}
\pi_r q_{r,s}=\pi_s q_{s,r}\qquad\text{for all }r,s\in\mathbb{Z}_m;
\end{equation}
\item[(iii)] consequently, $\pi$ is stationary: $\pi Q=0$ (equivalently, $\pi P_t=\pi$ for all $t\ge 0$).
\end{enumerate}
\end{prop}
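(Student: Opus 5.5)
The plan is to verify the three items directly from the definition \eqref{eq:rates_general}, in order.

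For (i), note that $\pi_r>0$ for every $r$, so the square roots $\sqrt{\pi_{r\pm1}/\pi_r}$ are well-defined, finite and strictly positive. Hence all off-diagonal rates are nonnegative; in fact the nearest-neighbour ones are strictly positive. Each row has at most three nonzero entries, namely $q_{r,r-1}$, $q_{r,r}$ and $q_{r,r+1}$, and the diagonal is defined so that they sum to zero. One point needs care: since $m\ge 3$, the indices $r-1$, $r$ and $r+1$ are distinct modulo $m$, so the two neighbour rates are not placed in the same entry. Without this the row-sum argument would break, and this is the only place the hypothesis $m\ge3$ enters.

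For (ii), I will split into cases according to $s-r \pmod m$.
\begin{itemize}
\item If $s=r$, the identity \eqref{eq:db} is trivial.
\item If $s\notin\{r,r\pm1\}$, then also $r\notin\{s,s\pm1\}$, so both sides of \eqref{eq:db} vanish.
\item If $s=r+1$, then $\pi_r q_{r,r+1}=\alpha\pi_r\sqrt{\pi_{r+1}/\pi_r}=\alpha\sqrt{\pi_r\pi_{r+1}}$. Symmetrically, since $r=s-1$, we get $\pi_s q_{s,s-1}=\alpha\pi_{r+1}\sqrt{\pi_r/\pi_{r+1}}=\alpha\sqrt{\pi_r\pi_{r+1}}$, so the two sides agree.
\item The case $s=r-1$ is the same computation with the roles of $r$ and $s$ exchanged.
\end{itemize}
The key observation is that the flux $\pi_r q_{r,s}=\alpha\sqrt{\pi_r\pi_s}$ is symmetric in $r$ and $s$. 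Modular indexing only requires that the edge $\{m-1,0\}$ is treated like any other edge, and the formula handles it uniformly.

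For (iii), I will sum detailed balance over $r$:
\[
(\pi Q)_s=\sum_r \pi_r q_{r,s}=\sum_r \pi_s q_{s,r}=\pi_s\sum_r q_{s,r}=0,
\]
where the last step uses the zero row sums from (i). To get $\pi P_t=\pi$, write $P_t=e^{tQ}$ on the finite state space. Then
\[
\frac{d}{dt}\,\pi P_t=\pi Q P_t=0,
\]
which gives $\pi P_t=\pi P_0=\pi$. Equivalently, expanding the exponential series, every term of order $n\ge 1$ contains the factor $\pi Q=0$.

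None of these steps is a genuine obstacle. The most delicate part is the modular bookkeeping in (i) and (ii): making sure the neighbour rates occupy distinct entries (which is where $m\ge3$ is needed) and that the wrap-around edge is covered.
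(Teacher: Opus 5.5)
Your proposal is correct and follows the same route as the paper: positivity of the rates plus the diagonal definition for (i), the symmetric-flux computation $\pi_r q_{r,r+1}=\alpha\sqrt{\pi_r\pi_{r+1}}=\pi_{r+1}q_{r+1,r}$ for (ii), and summing detailed balance over $r$ with zero row sums for (iii). You also handle correctly two points the paper leaves implicit: the role of $m\ge 3$ (so that $r-1,r,r+1$ are distinct), and the passage from $\pi Q=0$ to $\pi P_t=\pi$.
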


When $\pi$ is uniform, $\pi_{r+1}/\pi_r=1$ and the rates $q_{r,r\pm 1}$ are constant, recovering the usual continuous-time
nearest-neighbour random walk on the cycle. For general $\pi$, the forward rate $q_{r,r+1}$ is larger when
$\pi_{r+1}>\pi_r$, biasing moves toward higher-probability states while maintaining reversibility.
\subsection{Discrete von Mises process}

Fix $\kappa\ge 0$ and $\mu\in[0,2\pi)$. The discrete von Mises pmf on the grid $D_m=\{\theta_r=2\pi r/m:r\in\mathbb{Z}_m\}$
is
\begin{equation}\label{eq:dvm_def}
\pi^{\mathrm{vM}}_r(\kappa,\mu)
:= \frac{\exp\{\kappa\cos(\theta_r-\mu)\}}{Z_m(\kappa,\mu)},
\qquad
Z_m(\kappa,\mu):=\sum_{j=0}^{m-1}\exp\{\kappa\cos(\theta_j-\mu)\}.
\end{equation}

\begin{cor}[von Mises stationary law]\label{cor:vm_stationary}
Applying Proposition \ref{prop:targetpi} with $\pi=\pi^{\mathrm{vM}}(\kappa,\mu)$ yields a reversible nearest-neighbour chain
on $\mathbb{Z}_m$ whose stationary distribution is $\pi^{\mathrm{vM}}(\kappa,\mu)$.
\end{cor}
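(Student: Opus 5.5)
The plan is to verify the single hypothesis of Proposition \ref{prop:targetpi}, namely that $\pi^{\mathrm{vM}}(\kappa,\mu)$ is a strictly positive pmf on $\mathbb{Z}_m$, and then read off (i)--(iii). Positivity is immediate: each numerator $\exp\{\kappa\cos(\theta_r-\mu)\}$ is strictly positive. The normalizer satisfies $Z_m(\kappa,\mu)\ge m e^{-\kappa}>0$, so it is finite and positive, and the weights are then positive and sum to one by construction. Hence Proposition \ref{prop:targetpi} applies directly, with the given $\alpha>0$.

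Next I would make the rates explicit, both to check that they are well defined and to record the ``von Mises drift.'' The normalizer cancels in the ratios, giving
\[
q_{r,r\pm1}=\alpha\exp\Big\{\tfrac{\kappa}{2}\big(\cos(\theta_{r\pm1}-\mu)-\cos(\theta_r-\mu)\big)\Big\},
\]
which is finite and positive. Parts (i)--(iii) of Proposition \ref{prop:targetpi} then give a valid nearest-neighbour generator that is reversible with respect to $\pi^{\mathrm{vM}}$. It follows that $\pi^{\mathrm{vM}}Q=0$, so $\pi^{\mathrm{vM}}$ is stationary.

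The only point needing care is the word ``the'' in ``the stationary distribution,'' that is, uniqueness, which Proposition \ref{prop:targetpi} states only as stationarity. I would handle it with irreducibility. Every rate $q_{r,r\pm1}$ is strictly positive, so from any state the chain can reach any other by walking around the cycle. A finite irreducible continuous-time chain has a unique stationary distribution. This distribution is strictly positive, and it attracts every initial law as $t\to\infty$, since there is no periodicity issue in continuous time. The uniqueness step is therefore the main (mild) obstacle. If one prefers a self-contained argument, note that $Q$ is self-adjoint in $\ell^2(\pi)$ by detailed balance. Its Dirichlet form is
\[
\mathcal{E}(f,f)=\tfrac12\sum_r\sum_{s=r\pm1}\pi_r q_{r,s}(f(s)-f(r))^2 .
\]
If $\mathcal{E}(f,f)=0$, then $f$ is constant on each edge and hence constant on the connected cycle. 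So $0$ is a simple eigenvalue, which gives uniqueness. The case $\kappa=0$ reduces to the uniform walk, consistent with Corollary \ref{cor:uniform}.
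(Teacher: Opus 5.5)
Your proposal is correct and matches the paper, which gives no separate proof and treats the corollary as an immediate instance of Proposition \ref{prop:targetpi} once $\pi^{\mathrm{vM}}(\kappa,\mu)$ is seen to be a strictly positive pmf. Your irreducibility (or Dirichlet-form) argument for uniqueness is a worthwhile addition: Proposition \ref{prop:targetpi} only establishes stationarity, and the paper never proves the uniqueness it mentions at the start of Section \ref{sec:drift}.
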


If $\mu$ lies on the grid, i.e.\ $\mu=\theta_{r_0}$ for some $r_0\in\mathbb{Z}_m$, then $Z_m(\kappa,\mu)$ does not depend on $r_0$.
Indeed, replacing $r$ by $r-r_0$ permutes the summands in \eqref{eq:dvm_def}, so $Z_m(\kappa,\theta_{r_0})=Z_m(\kappa,\theta_0)$.
We therefore write $Z_m(\kappa):=Z_m(\kappa,\theta_0)$.

\begin{theorem}[Normalizing constant for discrete von Mises process]\label{thm:vm_norm}
Assume $\mu=\theta_{r_0}$ for some $r_0\in\mathbb{Z}_m$. Then
\begin{equation}\label{eq:vm_norm}
Z_m(\kappa)
= m\sum_{q\in\mathbb{Z}} I_{qm}(\kappa)
= m\Big(I_0(\kappa) + 2\sum_{q=1}^\infty I_{qm}(\kappa)\Big),
\end{equation}
where $I_n(\cdot)$ is the modified Bessel function of the first kind.
\end{theorem}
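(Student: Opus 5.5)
By the remark preceding the statement, we may take $r_0=0$, so that $Z_m(\kappa)=\sum_{r=0}^{m-1}\exp\{\kappa\cos(2\pi r/m)\}$. The plan is to expand each summand in its Fourier series in the angle and then sum over the grid using the orthogonality relation \eqref{eq:orthogonality}. This is the same character calculus used for Theorem~\ref{thm:kernel}.

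The first step is the Jacobi--Anger generating function of the modified Bessel functions:
\[
e^{\kappa\cos\theta}=\sum_{n\in\mathbb{Z}} I_n(\kappa)\,e^{in\theta},
\]
which holds for all real $\theta$ and $\kappa\ge0$. The series converges absolutely, since $\sum_n |I_n(\kappa)|=\sum_n I_n(\kappa)=e^{\kappa}<\infty$ (take $\theta=0$ and use $I_n(\kappa)\ge 0$). Substituting $\theta=\theta_r$ and summing over $r\in\mathbb{Z}_m$ gives a finite sum of absolutely convergent series, so the order of summation may be swapped:
\[
Z_m(\kappa)=\sum_{n\in\mathbb{Z}} I_n(\kappa)\sum_{r=0}^{m-1}e^{i2\pi nr/m}.
\]

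The inner sum is $m\varphi$-orthogonality with $\ell=0$, by \eqref{eq:orthogonality} read with $k=n\bmod m$. It equals $m$ if $m\mid n$ and $0$ otherwise. Only the indices $n=qm$ with $q\in\mathbb{Z}$ survive, which yields $Z_m(\kappa)=m\sum_{q\in\mathbb{Z}}I_{qm}(\kappa)$.

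For the second form, use the symmetry $I_{-n}=I_n$ for integer $n$, which follows from the generating function by replacing $\theta$ with $-\theta$. This lets us fold the terms with $q<0$ onto those with $q>0$, giving $m\bigl(I_0(\kappa)+2\sum_{q\ge1}I_{qm}(\kappa)\bigr)$.

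The only step needing care is justifying the interchange of the infinite Bessel series with the grid sum. Absolute convergence handles this, and no substantial obstacle is expected. As a sanity check, the $q=0$ term alone gives $mI_0(\kappa)$, the Riemann-sum approximation to $\int_0^{2\pi}e^{\kappa\cos\theta}\,d\theta/(2\pi)$ scaled by $m$. The terms $I_{qm}(\kappa)$ with $q\neq0$ are the aliasing corrections, and they decay super-exponentially in $m$.
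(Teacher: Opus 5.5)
Your proposal is correct and follows essentially the same route as the paper: the Fourier--Bessel (Jacobi--Anger) expansion, an interchange of sums justified by absolute convergence, the root-of-unity filter, and folding the negative indices via $I_{-n}=I_n$. The only cosmetic difference is that you reduce to $r_0=0$ at the outset using the translation remark, whereas the paper keeps $\mu=\theta_{r_0}$ throughout and observes $e^{-iqm\mu}=1$ at the end.
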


\begin{cor}[Exact trigonometric moments]\label{cor:vm_mom}
Assume $\mu=\theta_{r_0}$. Then for any integer $\ell$,
\begin{equation}\label{eq:vm_moment}
\mathbb{E}_{\pi^{\mathrm{vM}}} \left[e^{i \ell(\Theta-\mu)}\right]
= \frac{\sum_{q\in\mathbb{Z}} I_{\ell+qm}(\kappa)}{\sum_{q\in\mathbb{Z}} I_{qm}(\kappa)}.
\end{equation}
Equivalently,
\[
\mathbb{E}_{\pi^{\mathrm{vM}}} \left[e^{i\ell\Theta}\right]
=e^{i\ell\mu} \frac{\sum_{q\in\mathbb{Z}} I_{\ell+qm}(\kappa)}{\sum_{q\in\mathbb{Z}} I_{qm}(\kappa)}.
\]
\end{cor}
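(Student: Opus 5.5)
We reduce the claim to the Jacobi--Anger expansion combined with the orthogonality relation \eqref{eq:orthogonality}, in the same way one would prove Theorem \ref{thm:vm_norm}.

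First, by definition and the grid assumption $\mu=\theta_{r_0}$,
\[
\mathbb{E}_{\pi^{\mathrm{vM}}}\!\left[e^{i\ell(\Theta-\mu)}\right]
=\frac{1}{Z_m(\kappa)}\sum_{r=0}^{m-1} e^{i\ell(\theta_r-\theta_{r_0})}\exp\{\kappa\cos(\theta_r-\theta_{r_0})\}.
\]
Substituting $j=r-r_0 \pmod m$ permutes the summands, since $\theta_r-\theta_{r_0}\equiv\theta_j \pmod{2\pi}$ and both $e^{i\ell\cdot}$ and $\cos$ are $2\pi$-periodic. The numerator therefore equals $N_\ell:=\sum_{j=0}^{m-1}e^{i\ell\theta_j}e^{\kappa\cos\theta_j}$, and the denominator is $Z_m(\kappa)=N_0$.

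Next, we expand $e^{\kappa\cos\theta}=\sum_{n\in\mathbb{Z}}I_n(\kappa)e^{in\theta}$ (Jacobi--Anger). The series converges absolutely because $\sum_n|I_n(\kappa)|=e^{\kappa}$ for $\kappa\ge0$, so we may swap the finite sum over $j$ with the sum over $n$. This gives
\[
N_\ell=\sum_{n\in\mathbb{Z}}I_n(\kappa)\sum_{j=0}^{m-1}e^{i2\pi (n+\ell)j/m}.
\]
By \eqref{eq:orthogonality}, the inner sum equals $m$ when $n+\ell\equiv0 \pmod m$ and $0$ otherwise. Writing $n=-\ell+qm$ yields $N_\ell=m\sum_{q\in\mathbb{Z}}I_{qm-\ell}(\kappa)$.

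To match the stated form, we use $I_{-n}=I_n$ for integer $n$ and reindex $q\to-q$, which gives $\sum_q I_{qm-\ell}=\sum_q I_{\ell+qm}$. Taking $\ell=0$ recovers Theorem \ref{thm:vm_norm}. Dividing, the factors of $m$ cancel and we obtain \eqref{eq:vm_moment}. The second form follows by multiplying by the deterministic factor $e^{i\ell\mu}$.

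The only step needing care is justifying the interchange of summation and the symmetry $I_{-n}=I_n$. Both are standard: the interchange follows from absolute convergence, and the symmetry holds because $I_n(\kappa)=\frac{1}{\pi}\int_0^\pi e^{\kappa\cos\theta}\cos(n\theta)\,d\theta$. So there is no substantial obstacle. As a check, the ratio is real, which is consistent with the symmetry $\theta_j\mapsto-\theta_j$ of the grid, and this symmetry also shows that the imaginary part, the sine moment, vanishes.
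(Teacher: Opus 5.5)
Your proof is correct and follows essentially the same route as the paper's: the Bessel (Jacobi--Anger) expansion, an interchange of sums justified by absolute convergence, and the root-of-unity filter leaving only $n=-\ell+qm$, followed by $I_{-n}=I_n$ with $q\mapsto -q$ and division by $Z_m(\kappa)$. The only cosmetic difference is that you remove $\mu$ at the outset by reindexing $j=r-r_0$, whereas the paper carries the phase $e^{-iqm\mu}$ through the computation and notes it equals $1$ after filtering.
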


\subsection{Discrete wrapped Cauchy process}

Fix $\rho\in(0,1)$ and $\mu\in[0,2\pi)$. Consider the Poisson kernel values on the grid
\begin{equation}\label{eq:poisson_kernel_grid}
w_r(\rho,\mu):=\frac{1-\rho^2}{1-2\rho\cos(\theta_r-\mu)+\rho^2},\qquad r\in\mathbb{Z}_m,
\end{equation}
and define the discrete wrapped Cauchy pmf by normalization,
\[
\pi^{\mathrm{WC}}_r(\rho,\mu) := \frac{w_r(\rho,\mu)}{\sum_{j=0}^{m-1} w_j(\rho,\mu)}.
\]

\begin{cor}[Wrapped Cauchy stationary law]\label{cor:wc_stationary}
Applying Proposition \ref{prop:targetpi} with $\pi=\pi^{\mathrm{WC}}(\rho,\mu)$ yields a reversible nearest-neighbour chain
on $\mathbb{Z}_m$ whose stationary distribution is $\pi^{\mathrm{WC}}(\rho,\mu)$.
\end{cor}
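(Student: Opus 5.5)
The plan is to apply Proposition~\ref{prop:targetpi} directly, exactly as in Corollary~\ref{cor:vm_stationary}. The only hypothesis to verify is that $\pi^{\mathrm{WC}}(\rho,\mu)$ is a strictly positive pmf on $\mathbb{Z}_m$.

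First I will show that each weight $w_r(\rho,\mu)$ is finite and strictly positive. For $\rho\in(0,1)$ the numerator $1-\rho^2$ is positive. The denominator satisfies
\[
1-2\rho\cos(\theta_r-\mu)+\rho^2 \ge 1-2\rho+\rho^2=(1-\rho)^2>0,
\]
so it never vanishes. Hence $0<w_r(\rho,\mu)\le (1+\rho)/(1-\rho)<\infty$ for every $r$. The normalizer $\sum_j w_j(\rho,\mu)$ is then a finite positive number, so $\pi^{\mathrm{WC}}_r(\rho,\mu)>0$ and $\sum_r \pi^{\mathrm{WC}}_r=1$. This gives a strictly positive pmf for every $\mu\in[0,2\pi)$, with no need for $\mu$ to lie on the grid.

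Next I will invoke Proposition~\ref{prop:targetpi} with $\pi=\pi^{\mathrm{WC}}$. Items (i)--(iii) give the following:
\begin{itemize}
\item the rates $q_{r,r\pm1}=\alpha\sqrt{\pi_{r\pm1}/\pi_r}$ are well defined and positive, so $Q$ is a nearest-neighbour generator;
\item detailed balance \eqref{eq:db} holds, so the chain is reversible with respect to $\pi^{\mathrm{WC}}$;
\item $\pi^{\mathrm{WC}}$ is stationary.
\end{itemize}
One convenient observation is that the normalizing constant cancels in the ratios. The rates can therefore be written as $\alpha\sqrt{w_{r\pm1}/w_r}$.

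If ``the stationary distribution'' is meant to be unique, I will add one step. All nearest-neighbour rates on the cycle are strictly positive, so the chain is irreducible on the finite state space $\mathbb{Z}_m$. An irreducible finite continuous-time chain has a unique stationary distribution, and since $\pi^{\mathrm{WC}}$ is stationary it must be that distribution. I expect no real obstacle here. The only point needing care is the lower bound $(1-\rho)^2$ on the denominator, which relies on $\rho<1$ strictly.
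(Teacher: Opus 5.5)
Your proposal is correct and matches the paper, which treats this corollary as an immediate application of Proposition~\ref{prop:targetpi} and gives no separate proof. Your lower bound $(1-\rho)^2$ on the denominator shows $\pi^{\mathrm{WC}}$ is strictly positive for every $\mu$, and your irreducibility argument gives uniqueness of the stationary law; both simply make explicit what the paper leaves implicit.
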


If $\mu=\theta_{r_0}$ lies on the grid, then $\{ \theta_r-\mu:  r\in\mathbb{Z}_m \}$ is just a permutation of $\{\theta_r: r\in\mathbb{Z}_m\}$,
so the normalizer $\sum_r w_r(\rho,\mu)$ does not depend on $r_0$. We henceforth assume $\mu=\theta_{r_0}$.

\begin{theorem}[Normalizing constant and moments]\label{thm:wc_norm_mom}
Assume $\mu=\theta_{r_0}$ for some $r_0\in\mathbb{Z}_m$. Then
\begin{equation}\label{eq:wc_norm}
\sum_{r=0}^{m-1} w_r(\rho,\mu) = m \frac{1+\rho^m}{1-\rho^m},
\end{equation}
and therefore
\begin{equation}\label{eq:wc_pmf}
\pi^{\mathrm{WC}}_r(\rho,\mu)
=\frac{1-\rho^m}{m(1+\rho^m)} \frac{1-\rho^2}{1-2\rho\cos(\theta_r-\mu)+\rho^2}.
\end{equation}
Moreover, for $\ell\in\{0,1,\dots,m-1\}$,
\begin{equation}\label{eq:wc_moment}
\mathbb{E}_{\pi^{\mathrm{WC}}} \left[e^{i \ell(\Theta-\mu)}\right]
=\frac{\rho^\ell+\rho^{m-\ell}}{1+\rho^m}.
\end{equation}
\end{theorem}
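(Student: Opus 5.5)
Our approach is to expand the Poisson kernel in its Fourier series and then apply the orthogonality relation \eqref{eq:orthogonality}, exactly as in the proof of Theorem~\ref{thm:vm_norm} (there $e^{\kappa\cos\theta}=\sum_n I_n(\kappa)e^{in\theta}$; here the coefficients are geometric). We start from the classical identity, valid for $\rho\in(0,1)$ and all real $\phi$,
\[
\frac{1-\rho^2}{1-2\rho\cos\phi+\rho^2}=\sum_{n\in\mathbb{Z}}\rho^{|n|}e^{in\phi}.
\]
This follows by summing the two geometric series $\sum_{n\ge0}(\rho e^{i\phi})^n$ and $\sum_{n\ge1}(\rho e^{-i\phi})^n$. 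The series converges absolutely and uniformly, so finite sums over $r$ may be interchanged with the sum over $n$ freely.

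Because $\mu=\theta_{r_0}$, the set $\{\theta_r-\mu\}$ is a permutation of the grid. Reindexing $r\mapsto r+r_0$ therefore lets us take $\mu=0$ throughout.

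\textbf{Normalizer.} Take $\phi=\theta_r$ and sum over $r$. We get $\sum_r w_r=\sum_n\rho^{|n|}\sum_{r}e^{i2\pi nr/m}$. By \eqref{eq:orthogonality}, the inner sum equals $m$ if $m\mid n$ and $0$ otherwise. Writing $n=qm$ gives
\[
\sum_r w_r = m\sum_{q\in\mathbb{Z}}\rho^{m|q|} = m\left(1+\frac{2\rho^m}{1-\rho^m}\right)=m\,\frac{1+\rho^m}{1-\rho^m},
\]
which is \eqref{eq:wc_norm}. Dividing $w_r$ by this constant gives \eqref{eq:wc_pmf} immediately.

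\textbf{Moments.} With $\mu=0$ we need $\sum_r w_r e^{i\ell\theta_r}=\sum_n\rho^{|n|}\sum_r e^{i2\pi(n+\ell)r/m}$. The inner sum is $m$ exactly when $n\equiv-\ell\pmod m$, that is, when $n=qm-\ell$. This leaves $m\sum_{q\in\mathbb{Z}}\rho^{|qm-\ell|}$.

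The only step needing care is evaluating this two-sided sum, and this is where the restriction $0\le\ell\le m-1$ is used: it fixes the sign of $qm-\ell$ for every $q$. For $q\ge1$ we have $qm-\ell>0$, so these terms contribute $\rho^{m-\ell}\sum_{j\ge0}\rho^{jm}=\rho^{m-\ell}/(1-\rho^m)$. For $q\le0$ we have $|qm-\ell|=\ell+|q|m$, so these terms contribute $\rho^\ell/(1-\rho^m)$. Adding them gives
\[
\sum_r w_r e^{i\ell\theta_r}=m\,\frac{\rho^\ell+\rho^{m-\ell}}{1-\rho^m}.
\]
Dividing by the normalizer yields \eqref{eq:wc_moment}. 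As a sanity check, $\ell=0$ gives $1$.
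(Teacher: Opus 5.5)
Your proposal is correct and follows essentially the same route as the paper. Both expand the Poisson kernel as $\sum_{n}\rho^{|n|}e^{in\phi}$, apply the root-of-unity filter so that only $n\equiv 0$ (respectively $n\equiv-\ell$) mod $m$ survives, and sum the resulting geometric series, using $0\le\ell\le m-1$ to split the two-sided sum. The only cosmetic difference is that you reduce to $\mu=0$ by reindexing at the outset, whereas the paper carries the phase $e^{-iqm\mu}$ and notes it equals $1$ when $\mu=\theta_{r_0}$.
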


\section*{Acknowledgements}

The author would like to thank Prof. Karthik Sriram for some helpful discussions on \cite{mardia2023families} and for feedback on an earlier version of the paper.
\printbibliography

\section*{Appendix}
\subsection*{Proof of Theorem \ref{thm:kernel}}
\begin{proof}
Recall $(Lf)(r)=2f(r)-f(r+1)-f(r-1)$ with indices modulo $m$.
For $\varphi_k(r)=e^{i2\pi k r/m}$,
\[
\varphi_k(r+1)=e^{i\frac{2\pi k(r+1)}{m}}
=e^{i\frac{2\pi k}{m}}\varphi_k(r),
\qquad
\varphi_k(r-1)=e^{i\frac{2\pi k(r-1)}{m}}
=e^{-i\frac{2\pi k}{m}}\varphi_k(r).
\]
Therefore
\[
(L\varphi_k)(r)
= \Big(2-e^{i\frac{2\pi k}{m}}-e^{-i\frac{2\pi k}{m}}\Big)\varphi_k(r)
= \big(2-2\cos(2\pi k/m)\big)\varphi_k(r),
\]
so $\varphi_k$ is an eigenfunction with eigenvalue $\lambda_k=2-2\cos(2\pi k/m)$.
Using $1-\cos x = 2\sin^2(x/2)$ gives $\lambda_k=4\sin^2(\pi k/m)$. These follow from the fact that $L$ is a circulant matrix, see \cite{chung1997spectral} for details.

Define the inner product $\langle f,g\rangle := \frac{1}{m}\sum_{r=0}^{m-1} f(r)\overline{g(r)}$.
By \eqref{eq:orthogonality}, $\{\varphi_k\}_{k=0}^{m-1}$ is an orthonormal basis of $\mathbb{C}^m$.
Hence any function $f:\mathbb{Z}_m\to\mathbb{C}$ admits the Fourier expansion
\begin{equation}\label{eq:fourier_expansion}
f(r)=\sum_{k=0}^{m-1}\widehat f(k) \varphi_k(r),
\qquad
\widehat f(k):=\langle f,\varphi_k\rangle=\frac{1}{m}\sum_{r=0}^{m-1} f(r)\overline{\varphi_k(r)}.
\end{equation}
Since $L\varphi_k=\lambda_k\varphi_k$, linearity gives
\[
Lf=\sum_{k=0}^{m-1}\widehat f(k) \lambda_k \varphi_k,
\qquad
L^\beta f=\sum_{k=0}^{m-1}\widehat f(k) \lambda_k^\beta \varphi_k,
\]

Let $P_t^{(\beta)}=\exp(-\alpha t L^\beta)$. Using the power-series definition of the matrix exponential,
\[
P_t^{(\beta)}f
=\sum_{n=0}^\infty \frac{(-\alpha t)^n}{n!}(L^\beta)^n f.
\]
Since $(L^\beta)^n \varphi_k = (\lambda_k^\beta)^n \varphi_k$, we get
\[
P_t^{(\beta)}\varphi_k
=\sum_{n=0}^\infty \frac{(-\alpha t)^n}{n!}(\lambda_k^\beta)^n\varphi_k
=e^{-\alpha t\lambda_k^\beta} \varphi_k.
\]
Therefore, for general $f$ with expansion \eqref{eq:fourier_expansion},
\[
(P_t^{(\beta)}f)(r)=\sum_{k=0}^{m-1}\widehat f(k) e^{-\alpha t\lambda_k^\beta} \varphi_k(r).
\]

Apply the previous formula to the delta function $\delta_s(\cdot):=\mathbf{1}\{\cdot=s\}$.
Its Fourier coefficients are
\[
\widehat{\delta_s}(k)
=\frac{1}{m}\sum_{r=0}^{m-1}\delta_s(r)\overline{\varphi_k(r)}
=\frac{1}{m}\overline{\varphi_k(s)}
=\frac{1}{m}e^{-i\frac{2\pi k s}{m}}.
\]
Thus
\[
P_t^{(\beta)}(r,s)=(P_t^{(\beta)}\delta_s)(r)
=\sum_{k=0}^{m-1}\widehat{\delta_s}(k) e^{-\alpha t\lambda_k^\beta} \varphi_k(r)
=\frac{1}{m}\sum_{k=0}^{m-1}e^{-\alpha t\lambda_k^\beta} 
e^{i\frac{2\pi k}{m}(r-s)}.
\]
Replacing $r-s$ by $s-r$ (equivalently taking complex conjugates; the result is real) yields
\eqref{eq:kernelDFT}.

The final expression depends on $r$ and $s$ only through $r-s\pmod m$, hence
$P_t^{(\beta)}(r,s)=\kappa_t^{(\beta)}(s-r)$ for some function $\kappa_t^{(\beta)}$ on $\mathbb{Z}_m$.
\end{proof}

\subsection*{Proof of Proposition \ref{prop:moments}}
\begin{proof}
Fix $k\in\mathbb{Z}_m$ and consider the complex-valued function $f_k(r):=\exp(-i2\pi k r/m)$.
By Theorem \ref{thm:kernel}, $f_k$ is an eigenfunction of $L$ with eigenvalue $\lambda_k$, hence it is also
an eigenfunction of $L^\beta$ with eigenvalue $\lambda_k^\beta$, and therefore of $P_t^{(\beta)}=\exp(-\alpha tL^\beta)$ with eigenvalue
$e^{-\alpha t\lambda_k^\beta}$. Concretely, for every $r\in\mathbb{Z}_m$,
\begin{equation}\label{eq:eig_action}
(P_t^{(\beta)}f_k)(r)=e^{-\alpha t\lambda_k^\beta} f_k(r).
\end{equation}

Now use the Markov property in the form of the tower rule. Since $p_t=p_0P_t^{(\beta)}$,
\[
\widehat p_t(k)=\sum_{r=0}^{m-1} p_t(r) f_k(r)
=\sum_{r=0}^{m-1}\Big(\sum_{s=0}^{m-1}p_0(s)P_t^{(\beta)}(s,r)\Big)f_k(r).
\]
Swap the finite sums to obtain
\[
\widehat p_t(k)=\sum_{s=0}^{m-1}p_0(s)\sum_{r=0}^{m-1}P_t^{(\beta)}(s,r) f_k(r)
=\sum_{s=0}^{m-1}p_0(s) (P_t^{(\beta)}f_k)(s).
\]
Applying \eqref{eq:eig_action} gives
\[
\widehat p_t(k)=\sum_{s=0}^{m-1}p_0(s) e^{-\alpha t\lambda_k^\beta}f_k(s)
=e^{-\alpha t\lambda_k^\beta}\sum_{s=0}^{m-1}p_0(s)e^{-i2\pi k s/m}
=e^{-\alpha t\lambda_k^\beta} \widehat p_0(k),
\]
which proves \eqref{eq:ft_evolution}.

For the moment identity, note that $e^{i\ell\Theta_t}=e^{i\ell\theta_{X_t}}=\exp(i2\pi \ell X_t/m)$, so
\[
\mathbb{E} \left[e^{i\ell\Theta_t}\right]
=\sum_{r=0}^{m-1} p_t(r)\exp \Big(i\frac{2\pi \ell r}{m}\Big)
=\sum_{r=0}^{m-1} p_t(r)\exp \Big(-i\frac{2\pi (m-\ell) r}{m}\Big)
=\widehat p_t(m-\ell \bmod m).
\]
Applying \eqref{eq:ft_evolution} with $k=m-\ell\bmod m$, and using $\lambda_{m-\ell}=\lambda_\ell$, yields
\[
\mathbb{E} \left[e^{i\ell\Theta_t}\right]
=\widehat p_0(m-\ell) e^{-\alpha t\lambda_\ell^\beta}
=\mathbb{E} \left[e^{i\ell\Theta_0}\right]e^{-\alpha t\lambda_{\ell\bmod m}^\beta},
\]
which is \eqref{eq:trigmom}.

Finally, if $X_0=r_0$ then $\Theta_0=\theta_{r_0}$ and
\[
\mathbb{E} \left[e^{i\ell(\Theta_t-\theta_{r_0})}\right]
=e^{-i\ell\theta_{r_0}}\mathbb{E} \left[e^{i\ell\Theta_t}\right]
=e^{-i\ell\theta_{r_0}}\cdot e^{i\ell\theta_{r_0}}e^{-\alpha t\lambda_{\ell\bmod m}^\beta}
=e^{-\alpha t\lambda_{\ell\bmod m}^\beta}.
\]
Taking real and imaginary parts gives the cosine and sine statements.
\end{proof}

\subsection*{Proof of Theorem \ref{thm:mixing_uniform}}
\textbf{Notation.}
For a function $g:\mathbb{Z}_m\to\mathbb{C}$, with $\widehat g$, its discrete Fourier transform as defined above. The inversion formula is
\begin{equation}\label{eq:DFT_inverse}
g(r)=\frac{1}{m}\sum_{k=0}^{m-1}\widehat g(k)\exp \Big(i\frac{2\pi k r}{m}\Big),
\end{equation}
and Parseval's identity
\begin{equation}\label{eq:parseval}
\|g\|_{2,u}^2=\frac{1}{m}\sum_{r=0}^{m-1}|g(r)|^2=\frac{1}{m^2}\sum_{k=0}^{m-1}|\widehat g(k)|^2.
\end{equation}
See \cite{stein2011fourier} for a reference.
\begin{proof}
Since $u$ is stationary for $P_t^{(\beta)}$, we have $uP_t^{(\beta)}=u$. Therefore
\[
p_t-u=(p_0-u)P_t^{(\beta)}.
\]
Multiplying by $m$ and using $f_t=m(p_t-u)$ gives the linear evolution
\begin{equation}\label{eq:ft_evolves}
f_t=f_0P_t^{(\beta)}.
\end{equation}

From Proposition \ref{prop:moments},
each Fourier coefficient evolves as
\begin{equation}\label{eq:ft_fourier_evolution}
\widehat f_t(k)=\widehat f_0(k) e^{-\alpha t\lambda_k^\beta},\qquad k\in\mathbb{Z}_m.
\end{equation}
Moreover, the mean-zero property \eqref{eq:mean_zero_ft} implies
\begin{equation}\label{eq:k0_is_zero}
\widehat f_t(0)=\sum_{r=0}^{m-1} f_t(r)= m\sum_{r=0}^{m-1}u(r)f_t(r)=0,
\end{equation}
so only modes $k\neq 0$ contribute to $\|f_t\|_{2,u}$.

Using Parseval \eqref{eq:parseval} and \eqref{eq:ft_fourier_evolution},
\[
\|f_t\|_{2,u}^2
=\frac{1}{m^2}\sum_{k=0}^{m-1}|\widehat f_t(k)|^2
=\frac{1}{m^2}\sum_{k\neq 0}|\widehat f_0(k)|^2 e^{-2\alpha t\lambda_k^\beta}.
\]
Since $\lambda_k\ge \lambda_\star$ for all $k\neq 0$, we obtain
\[
\|f_t\|_{2,u}^2
\le e^{-2\alpha t\lambda_\star^\beta}\frac{1}{m^2}\sum_{k\neq 0}|\widehat f_0(k)|^2
\le e^{-2\alpha t\lambda_\star^\beta}\frac{1}{m^2}\sum_{k=0}^{m-1}|\widehat f_0(k)|^2
= e^{-2\alpha t\lambda_\star^\beta}\|f_0\|_{2,u}^2,
\]
and taking square roots yields \eqref{eq:L2contract}.

By \eqref{eq:tv_ratio_identity} and Cauchy--Schwarz under the probability measure $u$,
\[
\|f_t\|_{1,u}=\sum_{r=0}^{m-1}u(r)|f_t(r)|
\le \Big(\sum_{r=0}^{m-1}u(r)\Big)^{1/2}\Big(\sum_{r=0}^{m-1}u(r)|f_t(r)|^2\Big)^{1/2}
=\|f_t\|_{2,u}.
\]
Therefore $\|p_t-u\|_{TV}=\tfrac12\|f_t\|_{1,u}\le \tfrac12\|f_t\|_{2,u}$, and \eqref{eq:TVbound} follows from \eqref{eq:L2contract}.

If $p_0=\delta_{r_0}$, then $f_0(r_0)=m-1$ and $f_0(r)=-1$ for $r\neq r_0$. Hence
\[
\|f_0\|_{2,u}^2=\frac{1}{m}\Big((m-1)^2+(m-1)\cdot 1\Big)=m-1,
\]
so $\|f_0\|_{2,u}=\sqrt{m-1}$.
\end{proof}

\subsection*{Proof of Proposition \ref{prop:targetpi}}
\begin{proof}
\emph{(i)} Since $\pi_r>0$ for all $r$, the off-diagonal rates $q_{r,r\pm 1}$ in \eqref{eq:rates_general} are well-defined and strictly
positive. By definition $q_{r,s}=0$ for non-neighbours, and $q_{r,r}=-(q_{r,r+1}+q_{r,r-1})$, hence
\[
\sum_{s\in\mathbb{Z}_m} q_{r,s}=q_{r,r+1}+q_{r,r-1}+q_{r,r}=0,
\]
so each row sums to $0$ and $Q$ is a valid generator.

\emph{(ii)} If $s=r+1$, then using \eqref{eq:rates_general},
\[
\pi_r q_{r,r+1}
=\pi_r\alpha\sqrt{\frac{\pi_{r+1}}{\pi_r}}
=\alpha\sqrt{\pi_r\pi_{r+1}}
=\pi_{r+1}\alpha\sqrt{\frac{\pi_r}{\pi_{r+1}}}
=\pi_{r+1}q_{r+1,r}.
\]
The same computation holds for $s=r-1$. For all other $s$ we have $q_{r,s}=q_{s,r}=0$.
Therefore detailed balance \eqref{eq:db} holds for all pairs $(r,s)$, and the chain is reversible with respect to $\pi$.

\emph{(iii)} Stationarity follows by summing the detailed-balance equalities over $r$ for each fixed $s$:
\[
(\pi Q)_s=\sum_{r\in\mathbb{Z}_m}\pi_r q_{r,s}
=\sum_{r\in\mathbb{Z}_m}\pi_s q_{s,r}
=\pi_s\sum_{r\in\mathbb{Z}_m}q_{s,r}
=\pi_s\cdot 0=0,
\]
where we used the row-sum property from (i) in the last step. Hence $\pi Q=0$.
\end{proof}

\subsection*{Proof of Theorem \ref{thm:vm_norm}}

\begin{proof}
We use the standard Fourier--Bessel expansion (valid for all $\kappa\ge 0$ and $\theta\in\mathbb{R}$)
\begin{equation}\label{eq:bessel_expansion}
e^{\kappa\cos\theta}=\sum_{n\in\mathbb{Z}} I_n(\kappa) e^{in\theta}.
\end{equation}
With $\mu=\theta_{r_0}$, write $\theta_r-\mu = 2\pi(r-r_0)/m$. Then
\[
Z_m(\kappa,\mu)=\sum_{r=0}^{m-1} e^{\kappa\cos(\theta_r-\mu)}
=\sum_{r=0}^{m-1}\sum_{n\in\mathbb{Z}} I_n(\kappa)e^{in(\theta_r-\mu)}.
\]
Interchange the finite sum over $r$ with the absolutely convergent series in $n$ to obtain
\[
Z_m(\kappa,\mu)=\sum_{n\in\mathbb{Z}} I_n(\kappa)e^{-in\mu}\sum_{r=0}^{m-1} e^{in\theta_r}.
\]
The inner sum is the root-of-unity filter:
\begin{equation}\label{eq:root_filter}
\sum_{r=0}^{m-1} e^{in\theta_r}=\sum_{r=0}^{m-1} e^{i2\pi nr/m}
=
\begin{cases}
m,& m\mid n,\\
0,& m\nmid n.
\end{cases}
\end{equation}
Hence only indices $n=qm$ survive, giving
\[
Z_m(\kappa,\mu)=m\sum_{q\in\mathbb{Z}} I_{qm}(\kappa)e^{-iqm\mu}.
\]
Finally, since $\mu=\theta_{r_0}=2\pi r_0/m$, we have $e^{-iqm\mu}=e^{-i2\pi q r_0}=1$, so
$Z_m(\kappa,\mu)=m\sum_{q\in\mathbb{Z}} I_{qm}(\kappa)=Z_m(\kappa)$, which is \eqref{eq:vm_norm}.
The second equality follows from $I_{-n}(\kappa)=I_n(\kappa)$.
\end{proof}

\subsection*{Proof of Corollary \ref{cor:vm_mom}}
\begin{proof}
By definition,
\[
\mathbb{E}_{\pi^{\mathrm{vM}}} \left[e^{i\ell(\Theta-\mu)}\right]
=\frac{1}{Z_m(\kappa,\mu)}\sum_{r=0}^{m-1} e^{\kappa\cos(\theta_r-\mu)}e^{i\ell(\theta_r-\mu)}.
\]
Expand $e^{\kappa\cos(\theta_r-\mu)}$ using \eqref{eq:bessel_expansion}:
\[
\sum_{r=0}^{m-1} e^{\kappa\cos(\theta_r-\mu)}e^{i\ell(\theta_r-\mu)}
=\sum_{r=0}^{m-1}\sum_{n\in\mathbb{Z}} I_n(\kappa) e^{i(n+\ell)(\theta_r-\mu)}.
\]
Interchange sums and apply the root-of-unity filter \eqref{eq:root_filter} to the inner sum in $r$:
\[
\sum_{r=0}^{m-1} e^{i(n+\ell)(\theta_r-\mu)}
=e^{-i(n+\ell)\mu}\sum_{r=0}^{m-1}e^{i(n+\ell)\theta_r}
=
\begin{cases}
m e^{-i(n+\ell)\mu}, & m\mid(n+\ell),\\
0,& m\nmid(n+\ell).
\end{cases}
\]
Thus only indices $n=-\ell+qm$ contribute, and the numerator becomes
\[
m\sum_{q\in\mathbb{Z}} I_{-\ell+qm}(\kappa) e^{-i(qm)\mu}.
\]
When $\mu=\theta_{r_0}$, $e^{-iqm\mu}=1$. Using $I_{-n}=I_n$ and reindexing $q\mapsto -q$ gives
\[
m\sum_{q\in\mathbb{Z}} I_{-\ell+qm}(\kappa)
= m\sum_{q\in\mathbb{Z}} I_{\ell+qm}(\kappa).
\]
Divide by $Z_m(\kappa,\mu)=Z_m(\kappa)=m\sum_{q\in\mathbb{Z}}I_{qm}(\kappa)$ from Theorem \ref{thm:vm_norm}
to obtain \eqref{eq:vm_moment}. The second formula follows from
$e^{i\ell\Theta}=e^{i\ell\mu}e^{i\ell(\Theta-\mu)}$.
\end{proof}

\subsection*{Proof of Theorem \ref{thm:wc_norm_mom}}

\begin{proof}
	We use the classical Poisson kernel Fourier series (\cite{stein2011fourier}, Ch. 3), valid for $|\rho|<1$:
\begin{equation}\label{eq:poisson_fourier}
\frac{1-\rho^2}{1-2\rho\cos\theta+\rho^2}
=\sum_{n\in\mathbb{Z}}\rho^{|n|}e^{in\theta}
=1+2\sum_{n=1}^\infty \rho^n\cos(n\theta).
\end{equation}

With $\mu=\theta_{r_0}$, set $\theta=\theta_r-\mu$. Then by \eqref{eq:poisson_fourier},
\[
w_r(\rho,\mu)=\sum_{n\in\mathbb{Z}}\rho^{|n|}e^{in(\theta_r-\mu)}.
\]
Summing over $r$ and interchanging the (absolutely convergent) series with the finite sum gives
\[
\sum_{r=0}^{m-1} w_r(\rho,\mu)
=\sum_{n\in\mathbb{Z}}\rho^{|n|}e^{-in\mu}\sum_{r=0}^{m-1}e^{in\theta_r}.
\]
The root-of-unity filter yields
\begin{equation}\label{eq:root_filter_wc}
\sum_{r=0}^{m-1}e^{in\theta_r}
=\sum_{r=0}^{m-1}e^{i2\pi nr/m}
=
\begin{cases}
m,& m\mid n,\\
0,& m\nmid n.
\end{cases}
\end{equation}
Hence only $n=qm$ survive:
\[
\sum_{r=0}^{m-1} w_r(\rho,\mu)
=m\sum_{q\in\mathbb{Z}}\rho^{|qm|}e^{-iqm\mu}.
\]
Since $\mu=\theta_{r_0}=2\pi r_0/m$, we have $e^{-iqm\mu}=e^{-i2\pi qr_0}=1$, so
\[
\sum_{r=0}^{m-1} w_r(\rho,\mu)=m\sum_{q\in\mathbb{Z}}\rho^{|qm|}
=m\Big(1+2\sum_{q=1}^\infty \rho^{qm}\Big)
=m \frac{1+\rho^m}{1-\rho^m},
\]
which is \eqref{eq:wc_norm}. Dividing $w_r$ by this normalizer gives \eqref{eq:wc_pmf}.

By definition,
\[
\mathbb{E}_{\pi^{\mathrm{WC}}} \left[e^{i\ell(\Theta-\mu)}\right]
=\frac{\sum_{r=0}^{m-1} w_r(\rho,\mu) e^{i\ell(\theta_r-\mu)}}{\sum_{r=0}^{m-1} w_r(\rho,\mu)}.
\]
Use \eqref{eq:poisson_fourier} again:
\[
\sum_{r=0}^{m-1} w_r(\rho,\mu) e^{i\ell(\theta_r-\mu)}
=\sum_{r=0}^{m-1}\sum_{n\in\mathbb{Z}}\rho^{|n|}e^{i(n+\ell)(\theta_r-\mu)}.
\]
Interchange sums and apply the filter \eqref{eq:root_filter_wc} to $\sum_r e^{i(n+\ell)\theta_r}$:
\[
\sum_{r=0}^{m-1} e^{i(n+\ell)(\theta_r-\mu)}
=e^{-i(n+\ell)\mu}\sum_{r=0}^{m-1}e^{i(n+\ell)\theta_r}
=
\begin{cases}
m e^{-i(n+\ell)\mu},& m\mid(n+\ell),\\
0,& m\nmid(n+\ell).
\end{cases}
\]
Thus only indices $n=-\ell+qm$ contribute, and the numerator becomes
\[
m\sum_{q\in\mathbb{Z}}\rho^{|qm-\ell|}e^{-iqm\mu}.
\]
As before, $e^{-iqm\mu}=1$ when $\mu=\theta_{r_0}$. For $\ell\in\{0,1,\dots,m-1\}$ we compute the sum explicitly:
\[
\sum_{q\in\mathbb{Z}}\rho^{|qm-\ell|}
=\rho^\ell+\sum_{q=1}^\infty \rho^{qm-\ell}+\sum_{q=1}^\infty \rho^{qm+\ell}
=\rho^\ell+\frac{\rho^{m-\ell}}{1-\rho^m}+\frac{\rho^{m+\ell}}{1-\rho^m}
=\frac{\rho^\ell+\rho^{m-\ell}}{1-\rho^m}.
\]
Therefore the numerator equals $m(\rho^\ell+\rho^{m-\ell})/(1-\rho^m)$.
Dividing by the normalizer $m(1+\rho^m)/(1-\rho^m)$ from \eqref{eq:wc_norm} yields \eqref{eq:wc_moment}.
\end{proof}

\end{document}